\newtheorem{assumption}{Assumption}[section]
\newtheorem{lem}{Lemma}[section]
\newtheorem{thm}{Theorem}[section]
\newtheorem{pro}{Proposition}[section]
\newtheorem{exm}{Example}[section]
\begin{document}
\begin{frontmatter}
\title{Random Greedy Fast Block Kaczmarz Method for Solving Large-Scale Nonlinear Systems}

\tnotetext[label1]{The research of Dongling Wang is supported in part by the National Natural Science Foundation of China under grants 12271463.
 \\ Declarations of interest: none.
}

\author[XTU]{Renjie Ding} 
\ead{drjmath@smail.xtu.edu.cn;}
\author[XTU]{Dongling Wang\corref{mycorrespondingauthor}}
\ead{wdymath@xtu.edu.cn; ORCID 0000-0001-8509-2837}
\cortext[mycorrespondingauthor]{Corresponding author. }

\address[XTU]{School of Mathematics and Computational Science, Xiangtan University, Xiangtan, Hunan 411105, China}

\begin{abstract}
To efficiently solve large-scale nonlinear systems, we propose a novel Random Greedy Fast Block Kaczmarz (RGFBK) method. This approach integrates the strengths of random and greedy strategies while avoiding the computationally expensive pseudoinversion of Jacobian submatrices, thus enabling efficient solutions for large-scale problems. Our theoretical analysis establishes that the proposed method achieves linear convergence in expectation, with its convergence rate’s upper bound determined by the \textit{stochastic greedy condition number} and the relaxation parameter.
Numerical experiments confirm that when the Jacobian matrix exhibits a favorable stochastic greedy condition number and an appropriate relaxation parameter is selected, the algorithm’s convergence is significantly accelerated. As a result, the proposed method outperforms other comparable algorithms in both efficiency and robustness.
\end{abstract}

\begin{keyword}
Nonlinear Kaczmarz method, Random and Greedy, Large-Scale Nonlinear Systems.
\end{keyword}

\end{frontmatter}

\section{Introduction}

Consider solving the following large-scale nonlinear system:
\begin{equation}\label{eq:problem}
F(x)=0,\forall x\in D
\end{equation}
where $F=(F_1,\cdots, F_m)^T:\mathbb{R}^n\to \mathbb{R}^m$ is continuously differentiable and continuous on the closed set $D\subset \mathbb{R}^n$. The solution of large-scale nonlinear systems finds extensive applications in scientific computation, such as optimization problems \cite{kelley1999iterative}, operator equations \cite{atkinson1992survey}, and machine learning \cite{chen2019homotopy}, etc. Therefore, developing a class of efficient algorithms for nonlinear systems is of crucial importance. 

Newton-type methods \cite{kelley2003solving} are classical approaches for solving \eqref{eq:problem}, but they typically require computing the inverse of the Jacobian matrix, which is highly expensive. Over the past few decades, the Kaczmarz method \cite{strohmer2009randomized, bai2018relaxed, du2020randomized, zhang2023randomized}, as an efficient method for solving linear systems, has been well-developed. Recently, Kaczmarz-type methods for solving nonlinear systems \cite{wang2022nonlinear, gao2025convergence, xiao2025fast} have been studied, with their iterative formula given as follows:
\begin{equation}\label{eq:base update}
	x_{k+1} = x_{k} - \frac{F_{i_k}(x_k)}{\nabla F_{i_k}(x_k)}\nabla F_{i_k}(x_k),\, \text{for} \, k=1,2\cdots, 
\end{equation}
where $i_k\in [m]:=\{1,\cdots,m\}$ is determined by a cyclic or random strategy. From \eqref{eq:base update}, it can be seen that such methods are simple and easy to implement; however, their convergence rate may be relatively slow. To further improve the performance of the methods, several block Kaczmarz methods have been proposed \cite{zhang2023maximum, zhangli2024greedy}, which are defined as follows:
\begin{equation}\label{eq:block update}
	x_{k+1} = x_{k} - (F_{\mathcal{I}_k}(x_k))^{\dagger} F_{\mathcal{I}_k}(x_k),\, \text{for} \,\, k=1,2\cdots, 
\end{equation}
where \( F_{\mathcal{I}_k}'(x_k)^\dagger \) denotes the Moore-Penrose pseudoinverse of the row submatrix of the Jacobian matrix defined by the index \( \mathcal{I}_k \subset [m] \). 
Generally, these index set $\mathcal{I}_k$ determination strategies fall into two categories: one random-based, the other greedy-based. For instance, to prioritize eliminating larger residuals, RB-CNK \cite{zhang2024greedy} and RBWNK\cite{ye2024residual} select index sets based on the following greedy strategy:
\begin{equation}\label{eq:past Ik}
  \mathcal{I}_{k}=\left\{i\mid\left|f_{i}(x_{k})\right|^{2}\geq\delta_{k}\|f(x_{k})\|_{2}^{2}\right\},
\end{equation}
where $\delta_{k}=\frac{1}{2}\left(\frac{\max_{i\in[m]}\left|f_{i}(x_{k})\right|^{2}}{\left\|f(x_{k})\right\|_{2}^{2}}+\frac{1}{m}\right)$. To reduce the cost of determining index sets, RB-BSNK and DB-BSNK \cite{zhangli2024greedy} introduce subsampling strategies. Specifically, they first randomly select a set uniformly, then determine the final index set \( \mathcal{I}_k \) based on the maximum residual rule or maximum distance rule. Despite their effectiveness, their strategies cannot pre-control the size of the set, which may lead to extreme cases. In addition, to overcome the pseudoinverse computing cost in \eqref{eq:block update}, several pseudoinverse-free schemes have been proposed, yet all rely on greedy strategies for index set determination, and the aforementioned issues still exist. More relevant papers can be found in \cite{tan2024nonlinear, xiao2024averaging, zhang2024greedy}.

In this paper, in order to overcome the aforementioned difficulties and design an efficient block scheme suitable for solving large-scale problems,  we introduce both random and greedy strategies to determine the index set for each iteration.  Additionally, we derive a pseudoinverse-free update formula by solving the weighted linear residual optimization problem \eqref{eq:weighted linear res}. Consequently, compared with Kaczmarz-type methods based on pseudoinverse computation, our method achieves higher computational efficiency. In contrast to other pseudoinverse-free Kaczmarz-type methods, our strategy reduces the cost associated with determining the index set while inheriting the advantages of both random and greedy strategies.

The structure of the remaining part of the article is as follows: In Section 2, we establish the Random Greedy Fast Block Kaczmarz method and its convergence theory. In Section 3, we verify the effectiveness of the algorithm through numerical experiments. Section 4 presents the conclusions.

\emph{Notation:} Throughout the paper, for \( x \in \mathbb{R}^n \), the standard Euclidean norm is defined as \( \|x\|_2 = \sqrt{x^\mathsf{T}x} \). For matrix \( A \in \mathbb{R}^{m \times n} \), \( \text{range}(A) \), \( \sigma_{\min}(A) \), \( \sigma_{\max}(A) \) are its column space, nonzero minimum, and maximum singular values, respectively. For any positive integer \( m \), we define the index set \( [m] \) as \( [m] := \{1, 2, \ldots, m\} \). 
Given an index set \( J \subset [m] \), where \( |J| \) denotes the cardinality of \( J \), we define \( x_J \in \mathbb{R}^{|J|} \) as the subvector of \( x \) containing entries with indices corresponding to the rows in \( J \); likewise, \( A_J \in \mathbb{R}^{|J| \times n} \) denotes the row submatrix of \( A \) whose rows are indexed by \( J \).  We introduce the probability space \(([m], \mathcal{F}, \mathbb{P})\), where \(\mathcal{F} \subset 2^{[m]}\) (the power set of \([m]\)) serves as a \(\sigma\)-algebra. Let \(J\) be a random set-valued mapping with \(J \subseteq [m]\) (taking values in \(2^{[m]}\)), whose realizations are governed by the probability distribution \(\mathbb{P}(J)\). Thus, we define the \textit{stochastic greedy condition number} as follows:  
\begin{equation}\label{sgc}
	\kappa_{\alpha,\, \beta}(A):=\max_{I\in\mathcal{I}^{\alpha,\, \beta}}\kappa(A_I)=\max_{I\in\mathcal{I}^{\alpha,\, \beta}}\frac{\sigma_{\max}(A_I)}{\sigma_{\min}(A_I)},
\end{equation}
where the meaning of \( \mathcal{I}^{\alpha,\,\beta} \) will be explained subsequently.  

\section{The RGFBK method}
We now introduce the RGFBK method from two aspects: the index set determination rule and the update scheme. 
First, we introduce the random greedy strategy to determine the index set \( \mathcal{I}_k \).

Given \( \alpha, \beta \in \mathbb{N}_+ \) with \( \alpha \in [1, m] \) and \( \beta \in [1, \alpha] \), suppose the iterate \( x_k \in \mathbb{R}^n \) and the residual \( |F(x_k)| \in \mathbb{R}^m \). We draw \( \alpha \) distinct indices uniformly at random from \([m]\) to form random set-valued mapping \( J \). For \( J \), there are \( \binom{m}{\alpha} \) possible realizations, each with probability \( \mathbb{P}(J = J_k) = 1/\binom{m}{\alpha} \) for any specific realization \( J_k \subset [m] \) of \( J \), where \( |J_k| = \alpha \). From \( |F_{J_k}(x_k)| \), we select the indices of the \( \beta \) largest values to form a deterministic set \( \mathcal{I}_k(J_k) \), and the randomness of \( \mathcal{I}_k \) is inherited solely from \( J \), so \( \mathbb{P}(\mathcal{I}_k = \mathcal{I}_k(J_k)) =\mathbb{P}(J = J_k) = 1/\binom{m}{\alpha}\). We denote the sample space of \( J \) as \( \Omega^\alpha = \{ J_k \subset [m] \mid |J| = \alpha \} \), and the set of all possible realizations of \( \mathcal{I}_k \) as 
\[
\mathcal{I}^{\alpha,\,\beta} = \left\{ \mathcal{I}_k(J_k) \mid J_k \in \Omega^\alpha \right\}. 
\]
Thus, through uniform random sampling of \( J \) followed by the greedy selection rule, each realization \( \mathcal{I}_k(J_k) \in \mathcal{I}^{\alpha,\,\beta} \) of the random set \( \mathcal{I}_k \) corresponds to a unique index set. Thus, our subsampling strategy combines the strengths of randomness and greediness, allows the block size to be predetermined, and suits large-scale problems better.

To design an efficient block method, we consider minimizing the weighted linear residual optimization problem:
\begin{equation}\label{eq:weighted linear res}
	\begin{aligned}
		\min_{x\in\mathbb{R}^{n}} & \left\| \omega_k^T (F_{\mathcal{I}_k}(x_k)+F^{\prime}_{\mathcal{I}_k}(x_k)(x-x_k)) \right\|_2^2,
	\end{aligned}
\end{equation}
where \( \omega_k \in \mathbb{R}^{|\mathcal{I}_k|} \) denotes a weight vector, and \( \mathcal{I}_k \in \mathcal{I}^{\alpha,\, \beta} \) is a realization of the random variable \( \mathcal{I}_k \) from the aforementioned random greedy rule.  
We select appropriate weights to reflect the importance of different residual components. In particular, we can choose \( \omega_k = F_{\mathcal{I}_k}(x_k) \). By solving the weighted linear residual optimization problem and introducing the relaxation parameter \( \gamma \), we obtain the following update formula:
\begin{equation}\label{eq:update}
	\begin{aligned}
		x_{k+1} = x_k - \gamma \frac{\omega_k^TF_{\mathcal{I}_k}(x_k)}{\|F_{\mathcal{I}_k}^{\prime}(x_k)^T\omega_k\|}F_{\mathcal{I}_k}^{\prime}(x_k)^T\omega_k,
	\end{aligned}
\end{equation}

Summarizing the above ideas, we obtain Algorithm \ref{alg:RGFBK} as follows:
\begin{algorithm}
	\caption{Random Greedy Fast Block Kaczmarz Method for Solving Large-Scale Nonlinear Systems}
	\label{alg:RGFBK}
	\begin{algorithmic}[1]
		\State \textbf{Input:} $x_0$, parameter $\alpha$, $\beta$ and $\gamma\in(0, 2)$, $k=0$, $r_0=F(x_0)$
		\While {The stopping criterion is not satisfied}
		\State Uniformly randomly select \( \alpha \) samples from \([m]\) to form control set \( \Omega_k \).

		\State From \( \Omega_k \), select the \( \beta \) samples with the largest residuals to form \( \mathcal{I}_k \).
		
		\State The weight vector \( \omega_k \) is computed according to the given rule, e.g., \( \omega_k = F_{\mathcal{I}_k}(x_k) \).  
		
		\State Update  $x_{k} \leftarrow x_k - \gamma \frac{\omega_k^TF_{\mathcal{I}_k}(x_k)}{\|F_{\mathcal{I}_k}^{\prime}(x_k)^T\omega_k\|}F_{\mathcal{I}_k}^{\prime}(x_k)^T\omega_k$
		\State Compute residual $r_k=F(x_k)$
		\EndWhile
		\State \textbf{Output:} $x_{k}$
	\end{algorithmic}
\end{algorithm}

Updated formula \eqref{eq:update} can also be understood as a special variant of the sketching Newton method \cite{yuan2022sketched}. When the relaxation parameter \(\gamma = 1\), the sketch matrix $S_k\in \mathbb{R}^{m}$ is exactly $S_k^{(i)} = 
	\begin{cases} 
		F_{i}(x_k) & \text{if} \quad i \in \mathcal{I}_k , \\
		0 \quad & \text{if} \quad i \notin \mathcal{I}_k.
	\end{cases}$.
Thus, from the perspective of sketching, more choices for weight vectors are possible; for instance, we can select Gaussian vectors as the weight vectors.

We now proceed to discuss the convergence theory when the weight vector is taken as the residual vector. To analyze the convergence of Algorithm \ref{alg:RGFBK}, we first present the following basic assumptions.

\begin{assumption}\label{as:asc}\cite{haltmeier2007kaczmarz}
	The following conditions hold:
	\begin{itemize}
		\item[(i)] 
	For each \(i \in \{1, 2, \ldots, m\}\) and any \(x_1, x_2 \in \mathbb{R}^n\), there exists some \(\eta_i \in [0, \eta)\) where \(\eta = \max_{1 \leq i \leq m} \eta_i < \frac{1}{2}\), such that\(
	|F_i(x_1) - F_i(x_2) - \nabla F_i(x_1)(x_1 - x_2)| \leq \eta_i |F_i(x_1) - F_i(x_2)|.
	\)
	
	In such cases, the function \(F(x): \mathbb{R}^n \to \mathbb{R}^m\) is said to satisfy the local tangential cone condition.
\item[(ii)] 
For $x\in D, F^{\prime}(x)$ is row bounded below and full column rank matrix, i.e., there exits a positive constant $\varepsilon>0$ such that  $\|F_{i}^{\prime}(x)\|\geq \varepsilon$ for any $i\in [m]$. 
	\end{itemize}
\end{assumption}
Next, we present some properties and lemmas required for the convergence analysis.

\begin{pro}\label{pro}\cite{xiao2024averaging}
	Suppose the function \( F \) meets the local tangential cone condition. Then, given any \( x_1, x_2 \in \mathbb{R}^n \) and an index subset \( I \subset \{1, 2, \ldots, m\} \), the following inequalities hold:
	
	\[
	\left\lVert F_I(x_1) - F_I(x_2) - F_I'(x_1)(x_1 - x_2) \right\rVert_2^2 \leq \eta^2 \left\lVert F_I(x_1) - F_I(x_2) \right\rVert_2^2 \tag{3.1}
	\]

	\[
	\left\lVert F_I(x_1) - F_I(x_2) \right\rVert_2^2 \geq \frac{1}{1 + \eta^2} \left\lVert F_I'(x_1)(x_1 - x_2) \right\rVert_2^2 \tag{3.2}
	\] 
\end{pro}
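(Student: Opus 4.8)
The plan is to derive both inequalities directly from the scalar (componentwise) local tangential cone condition in Assumption \ref{as:asc}(i) by squaring and summing over the indices in $I$. First I would observe that for each fixed $i \in I$, applying the stated inequality with the roles of $x_1, x_2$ as given yields
\[
|F_i(x_1) - F_i(x_2) - \nabla F_i(x_1)(x_1 - x_2)|^2 \leq \eta_i^2\, |F_i(x_1) - F_i(x_2)|^2 \leq \eta^2\, |F_i(x_1) - F_i(x_2)|^2,
\]
where the last step uses $\eta_i \leq \eta$. Summing this over all $i \in I$ and recognizing that $\sum_{i \in I} |F_i(x_1) - F_i(x_2) - \nabla F_i(x_1)(x_1-x_2)|^2 = \|F_I(x_1) - F_I(x_2) - F_I'(x_1)(x_1-x_2)\|_2^2$ (and similarly for the right-hand side) gives inequality (3.1) immediately.

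For inequality (3.2), I would start from the elementary vector identity $F_I'(x_1)(x_1 - x_2) = \big(F_I(x_1) - F_I(x_2)\big) - \big(F_I(x_1) - F_I(x_2) - F_I'(x_1)(x_1-x_2)\big)$ and apply the triangle inequality together with the bound $(a+b)^2 \leq (1+\eta^2)\big(\tfrac{1}{\eta^2}a^2 + b^2\big)$ — or more cleanly, the inequality $\|u - v\|_2^2 \leq (1 + \eta^2)\|u\|_2^2 + (1 + \eta^{-2})\|v\|_2^2$ applied with a suitable splitting. A slicker route: write $u = F_I(x_1)-F_I(x_2)$ and $v = u - F_I'(x_1)(x_1-x_2)$, so $F_I'(x_1)(x_1-x_2) = u - v$ with $\|v\|_2^2 \leq \eta^2 \|u\|_2^2$ by (3.1); then $\|u - v\|_2 \leq \|u\|_2 + \|v\|_2 \leq (1+\eta)\|u\|_2$, hence $\|F_I'(x_1)(x_1-x_2)\|_2^2 \leq (1+\eta)^2 \|u\|_2^2 \leq (1+\eta^2) \cdot \tfrac{(1+\eta)^2}{1+\eta^2}\|u\|_2^2$; but since we want the cleaner constant $1+\eta^2$, I would instead use the Cauchy–Schwarz-type bound $\|u-v\|_2^2 \le (1+\eta^{-2})^{-1}$-weighted... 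Actually the intended argument is: $\|u-v\|_2^2 \le (1+t)\|u\|_2^2 + (1+t^{-1})\|v\|_2^2$ for any $t>0$; choosing $t = \eta^2$ and using $\|v\|_2^2 \le \eta^2\|u\|_2^2$ gives $\|u-v\|_2^2 \le (1+\eta^2)\|u\|_2^2 + (1+\eta^{-2})\eta^2\|u\|_2^2 = (1+\eta^2)\|u\|_2^2 + (1+\eta^2)\|u\|_2^2$, which overshoots. The correct choice is to bound $\|F_I(x_1)-F_I(x_2)\|_2 = \|(u-v) + v\|_2 \ge \|u-v\|_2 - \|v\|_2 \ge \|F_I'(x_1)(x_1-x_2)\|_2 - \eta\|u\|_2$, i.e. $(1+\eta)\|u\|_2 \ge \|F_I'(x_1)(x_1-x_2)\|_2$; squaring and noting $(1+\eta)^2 \le 2(1+\eta^2)$ would give a weaker constant, so most likely the paper actually intends the reverse-triangle bound combined with $\|F_I'(x_1)(x_1-x_2)\|_2 \le \|u\|_2 + \|v\|_2$ and then a direct estimate. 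I would reconcile the exact constant $\tfrac{1}{1+\eta^2}$ by using the orthogonal-type inequality $\|u\|_2^2 \ge \tfrac{1}{1+\eta^2}\|u-v\|_2^2$ whenever $\|v\|_2 \le \eta\|u\|_2$, which follows from $\|u-v\|_2^2 \le \|u\|_2^2 + 2\|u\|_2\|v\|_2 + \|v\|_2^2 \le (1+\eta)^2\|u\|_2^2$ — and then note $(1+\eta)^2 \le \tfrac{(1+\eta^2)\cdot\text{something}}{}$...

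The cleanest correct derivation, which I would present, is: from (3.1), $\|v\|_2 \le \eta\|u\|_2$, so by the triangle inequality $\|F_I'(x_1)(x_1-x_2)\|_2 = \|u - v\|_2 \le \|u\|_2 + \|v\|_2 \le (1+\eta)\|u\|_2$. Squaring, $\|F_I'(x_1)(x_1-x_2)\|_2^2 \le (1+\eta)^2\|u\|_2^2$. Since $\eta < 1/2$, one has $(1+\eta)^2 < 1+\eta^2$ is false; rather the paper's inequality (3.2) as written, $\|u\|_2^2 \ge \tfrac{1}{1+\eta^2}\|F_I'(x_1)(x_1-x_2)\|_2^2$, must come from a tighter argument — specifically expanding $\|u-v\|_2^2 = \|u\|_2^2 - 2\langle u,v\rangle + \|v\|_2^2$ and using $-2\langle u,v\rangle \le 2\|u\|_2\|v\|_2$, but to get the stated constant one uses instead $\|u-v\|_2^2 \le (1+\eta^2)\|u\|_2^2$ via the refined step $2\|u\|_2\|v\|_2 + \|v\|_2^2 \le 2\eta\|u\|_2^2 + \eta^2\|u\|_2^2$ and then arguing $2\eta \le \eta^2$... which fails for small $\eta$. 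I therefore expect the main obstacle to be pinning down the precise constant in (3.2): the honest statement provable by elementary means is with constant $(1+\eta)^{-2}$ (or, via $\|u-v\|_2^2 \le (1+\tfrac1t)\|v\|_2^2 + (1+t)\|u\|_2^2$ optimized, with constant depending on $\eta$), and I would either cite \cite{xiao2024averaging} for the exact form or note that since only $\eta < 1/2$ is used elsewhere, any constant of the form $c(\eta)^{-1}$ with $c(\eta) \to 1$ as $\eta \to 0$ suffices for the downstream convergence analysis. In the write-up I would present the summation argument for (3.1) in full and then reduce (3.2) to (3.1) via the triangle inequality, flagging the constant as in the cited reference.
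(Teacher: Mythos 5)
The paper never proves this proposition; it is imported verbatim from \cite{xiao2024averaging}, so there is no in-paper argument to compare against. Your derivation of (3.1) --- square the componentwise tangential cone inequality, bound $\eta_i$ by $\eta$, and sum over $i \in I$ --- is the standard argument and is correct.

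Your difficulty with (3.2) is not a failure on your part: the constant as printed is genuinely not derivable from (3.1). Writing $u = F_I(x_1) - F_I(x_2)$ and $v = u - F_I'(x_1)(x_1 - x_2)$, the only information (3.1) supplies is $\|v\|_2 \le \eta \|u\|_2$, and under that hypothesis alone the sharp bound on $\|u - v\|_2^2$ is $(1+\eta)^2 \|u\|_2^2$, attained when $v = -\eta u$ (in one dimension take $u=1$, $v=-\eta$, so $|u-v|^2 = (1+\eta)^2 > 1+\eta^2$). Hence the provable form of (3.2) carries the constant $\frac{1}{(1+\eta)^2}$, or the weaker $\frac{1}{2(1+\eta^2)}$ via $\|u-v\|_2^2 \le 2\|u\|_2^2 + 2\|v\|_2^2$; since $(1+\eta)^2 \le 2(1+\eta^2)$, the triangle-inequality route gives the better of the two. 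Your closing plan --- prove (3.1) by summation, deduce (3.2) from it with constant $(1+\eta)^{-2}$, and flag the discrepancy with the printed constant --- is the right resolution, and you should commit to it cleanly rather than retaining the several abandoned attempts in the write-up. The change is harmless downstream: in the proof of the main theorem the factor $\frac{1}{1+\eta^2}$ would be replaced by $\frac{1}{(1+\eta)^2}$, which still yields a contraction factor in $(0,1)$ for $\gamma \in (0,\,2(1-\eta))$ because $2\gamma(1-\eta) - \gamma^2 \le (1-\eta)^2 < (1+\eta)^2 \kappa_{\alpha,\beta}^2$.
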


\begin{lem}\label{le:key lem}
	Suppose that  $F(x)$ satisfies Assumption \ref{as:asc} and there exists $x_*\in D$ such that $F(x_*)=0$. The sequence $\{x_k\}_{k\geq0}$ generated by \( x_{k+1} = x_k - \gamma \frac{\omega_k^T F_{\mathcal{I}_k}(x_k)}{\|F_{\mathcal{I}_k}^{\prime}(x_k)^T \omega_k\|} F_{\mathcal{I}_k}^{\prime}(x_k)^T \omega_k \) has the estimate:
	\begin{equation}
			\|x_{k+1}-x_*\|_2^2 \leq \|x_k - x_*\|_2^2 - [2\gamma(1-\eta)-\gamma^2] \frac{|\omega_k^T F_{\mathcal{I}_k}(x_k)|\cdot\|F_{\mathcal{I}_k}(x_k)\|_2}{{\sigma_{\max}^2(F'_{\mathcal{I}_k}(x_k))}\cdot\|\omega_k\|_2}.
		\end{equation}
In particular, when \(\omega_k = F_{\mathcal{I}_k}(x_k)\), we have
\begin{equation}
	\|x_{k+1}-x_*\|_2^2 \leq \|x_k - x_*\|_2^2 - [2\gamma(1-\eta)-\gamma^2] \frac{\|F_{\mathcal{I}_k}(x_k)\|_2^2}{{\sigma_{\max}^2(F'_{\mathcal{I}_k}(x_k))}}.
\end{equation}
\end{lem}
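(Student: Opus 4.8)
The plan is to carry out the standard Kaczmarz-type ``distance to a solution decreases'' computation, using the local tangential cone condition (Proposition~\ref{pro}) to control the linearization error and the spectral inequality $\|F_{\mathcal I_k}'(x_k)^T\omega_k\|_2\le\sigma_{\max}(F_{\mathcal I_k}'(x_k))\,\|\omega_k\|_2$ to pass to the stated right-hand side. To lighten notation I would set $A_k:=F_{\mathcal I_k}'(x_k)$, $r_k:=F_{\mathcal I_k}(x_k)$, $v_k:=A_k^T\omega_k$ and $a_k:=\omega_k^T r_k$, so that the update reads $x_{k+1}=x_k-\gamma\,\tfrac{a_k}{\|v_k\|_2^2}\,v_k$.

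First I would subtract $x_*$ and expand the square:
\[
\|x_{k+1}-x_*\|_2^2=\|x_k-x_*\|_2^2-2\gamma\,\frac{a_k}{\|v_k\|_2^2}\,\langle v_k,\,x_k-x_*\rangle+\gamma^2\,\frac{a_k^2}{\|v_k\|_2^2}.
\]
The core of the argument is a lower bound for the cross term $a_k\,\langle v_k,\,x_k-x_*\rangle$. Since $\langle v_k,\,x_k-x_*\rangle=\omega_k^T A_k(x_k-x_*)$ and $F(x_*)=0$ gives $r_k=F_{\mathcal I_k}(x_k)-F_{\mathcal I_k}(x_*)$, Proposition~\ref{pro} (inequality (3.1)) applied with $x_1=x_k$, $x_2=x_*$, $I=\mathcal I_k$ yields $\|r_k-A_k(x_k-x_*)\|_2\le\eta\|r_k\|_2$. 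Writing $\omega_k^T A_k(x_k-x_*)=a_k-\omega_k^T\!\big(r_k-A_k(x_k-x_*)\big)$ and estimating the correction by Cauchy--Schwarz ($\le\eta\|\omega_k\|_2\|r_k\|_2$, together with $|a_k|\le\|\omega_k\|_2\|r_k\|_2$), I would obtain a lower bound for $a_k\langle v_k,x_k-x_*\rangle$ which, once inserted into the expansion alongside the quadratic term $\gamma^2 a_k^2/\|v_k\|_2^2$, assembles the coefficient $2\gamma(1-\eta)-\gamma^2$ in front of a positive quantity. This coefficient is required to be nonnegative to proceed, i.e. $\gamma$ must be restricted so that $\gamma\le 2(1-\eta)$; under that restriction the final step is to replace $\|v_k\|_2^2=\|A_k^T\omega_k\|_2^2$ in the subtracted (negative) term by the larger $\sigma_{\max}^2(A_k)\|\omega_k\|_2^2$, which yields the claimed inequality.

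The special case $\omega_k=F_{\mathcal I_k}(x_k)$ then follows immediately, since there $a_k=\|r_k\|_2^2=\|\omega_k\|_2\|r_k\|_2$, so the numerator $|\omega_k^T F_{\mathcal I_k}(x_k)|\cdot\|F_{\mathcal I_k}(x_k)\|_2$ and the $\|\omega_k\|_2$ in the denominator collapse to $\|F_{\mathcal I_k}(x_k)\|_2^2$. The step I expect to be the genuine obstacle is the cross-term estimate: one must track the sign of $a_k$ and the direction of each Cauchy--Schwarz application carefully, so that the linearization-error contribution does not swallow the $a_k^2$ term and so that the resulting positive quantity lines up with the mixed numerator $|\omega_k^T F_{\mathcal I_k}(x_k)|\cdot\|F_{\mathcal I_k}(x_k)\|_2$ appearing in the statement; the remaining ingredients (expanding the square, invoking Proposition~\ref{pro}, and the spectral bound) are routine.
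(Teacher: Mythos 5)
Your plan follows the paper's proof essentially step for step: expand $\|x_{k+1}-x_*\|_2^2$, use $F_{\mathcal{I}_k}(x_*)=0$ to make the linearization error $e_k:=F_{\mathcal{I}_k}(x_k)-F_{\mathcal{I}_k}(x_*)-F'_{\mathcal{I}_k}(x_k)(x_k-x_*)$ appear in the cross term, control it by inequality (3.1) of Proposition~\ref{pro} together with Cauchy--Schwarz, and finally replace $\|F'_{\mathcal{I}_k}(x_k)^T\omega_k\|_2^2$ by the larger $\sigma_{\max}^2(F'_{\mathcal{I}_k}(x_k))\|\omega_k\|_2^2$ in the subtracted term (which, as you note, requires $2\gamma(1-\eta)-\gamma^2\ge 0$; both you and the paper also read the denominator of the update as $\|F'_{\mathcal{I}_k}(x_k)^T\omega_k\|_2^2$, correcting the missing square in the stated formula). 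So the route is the same one the paper takes.

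The obstacle you single out at the end is, however, a genuine one, and it is present in the paper's own proof as well. In your notation, after the tangential-cone and Cauchy--Schwarz steps one is left with the increment $(\gamma^2-2\gamma)\,a_k^2/\|v_k\|_2^2+2\gamma\eta\,|a_k|\,\|\omega_k\|_2\|r_k\|_2/\|v_k\|_2^2$, and to reach the stated bound one must replace $a_k^2$ by $|a_k|\,\|\omega_k\|_2\|r_k\|_2$ inside the term carrying the \emph{negative} coefficient $\gamma^2-2\gamma$. That substitution is only an upper bound if $|a_k|=|\omega_k^T r_k|\ge\|\omega_k\|_2\|r_k\|_2$, i.e.\ the \emph{reverse} of Cauchy--Schwarz, which fails for general $\omega_k$; when $|a_k|$ is small compared with $\|\omega_k\|_2\|r_k\|_2$ the positive error contribution $2\gamma\eta|a_k|\|\omega_k\|_2\|r_k\|_2/\|v_k\|_2^2$ can dominate the tiny subtracted quantity in the claimed estimate, so this line of argument cannot deliver the first (general-$\omega_k$) inequality of the lemma. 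The paper performs the same substitution silently. For the particular choice $\omega_k=F_{\mathcal{I}_k}(x_k)$ --- the only one used in the convergence theorem --- Cauchy--Schwarz holds with equality, $a_k=\|r_k\|_2^2=\|\omega_k\|_2\|r_k\|_2$, and both your argument and the paper's close without difficulty. So your proposal does prove the ``in particular'' estimate, but, exactly like the paper, it leaves the general-$\omega_k$ claim unproved; to rescue that case one would need an additional hypothesis of the form $|\omega_k^T F_{\mathcal{I}_k}(x_k)|\ge c\,\|\omega_k\|_2\,\|F_{\mathcal{I}_k}(x_k)\|_2$ and a correspondingly weakened constant.
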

\begin{proof}
	Substituting the expression of the iteration formula, we have
	
	$\begin{aligned}
		\left\|x_{k+1}-x_{*}\right\|_{2}^{2} &  =\left\|x_{k}-x_{*}\right\|_{2}^{2}+\gamma^2\left\|\frac{\omega_k^T F_{I_{k}}(x_{k})}{\left\|F_{I_{k}}^{\prime}(x_{k})^T\omega_k\right\|_{2}^{2}}F_{I_{k}}^{\prime}(x_{k})^T\omega_k\right\|_{2}^{2}-2\gamma\left\langle\frac{\omega_k^T F_{I_{k}}(x_{k})}{\left\|F_{I_{k}}^{\prime}(x_{k})^T\omega_k\right\|_{2}^{2}}F_{I_{k}}^{\prime}(x_{k})^T\omega_k,x_{k}-x_{*}\right\rangle \\
		& =\left\|x_{k}-x_{*}\right\|_{2}^{2}+\gamma^2\left\|\frac{\omega_k^T F_{I_{k}}(x_{k})}{\left\|F_{I_{k}}^{\prime}(x_{k})^T\omega_k\right\|_{2}^{2}}F_{I_{k}}^{\prime}(x_{k})^T\omega_k\right\|_{2}^{2}+2\gamma\frac{\omega_k^T F_{I_{k}}(x_{k})}{\left\|(F_{I_{k}}^{\prime}(x_{k}))^{T}\omega_k\right\|_{2}^{2}}\omega_k^{T}(F_{I_{k}}(x_{k})-F_{I_{k}}(x_{*})-F_{I_{k}}^{\prime}(x_{k})(x_{k}-x_{*})) \\
		& -2\gamma\frac{\left\|\omega_k^T F_{I_{k}}(x_{k})\right\|_2^2}{\left\|F_{I_{k}}^{\prime}(x_{k})^{T}\omega_k\right\|_{2}^{2}} .\\
	\end{aligned}$
By the Cauchy-Schwarz inequality and Property \ref{pro}, we can obtain

$\begin{aligned}
	 \left\|x_{k+1}-x_{*}\right\|_{2}^{2}& \leq \left\|x_{k}-x_{*}\right\|_{2}^{2}+2\gamma\frac{|\omega_k^T F_{I_{k}}(x_{k})|\cdot\|\omega_k\|_2}{\left\|(F_{I_{k}}^{\prime}(x_{k}))^{T}\omega_k\right\|_{2}^{2}}\|F_{I_{k}}(x_{k})-F_{I_{k}}(x_{*})-F_{I_{k}}^{\prime}(x_{k})(x_{k}-x_{*})\|_2  +(\gamma^2-2\gamma)\frac{|\omega_k^T F_{I_{k}}(x_{k})|^2}{\left\|F_{I_{k}}^{\prime}(x_{k})^T\omega_k\right\|_{2}^{2}}\\
	&\leq \left\|x_{k}-x_{*}\right\|_{2}^{2}+2\gamma\frac{|\omega_k^T F_{I_{k}}(x_{k})|\cdot\|\omega_k\|_2}{\left\|(F_{I_{k}}^{\prime}(x_{k}))^{T}\omega_k\right\|_{2}^{2}}\cdot\eta\|F_{\mathcal{I}_k}(x_k)\|  +(\gamma^2-2\gamma)\frac{|\omega_k^T F_{I_{k}}(x_{k})|^2}{\left\|F_{I_{k}}^{\prime}(x_{k})^T\omega_k\right\|_{2}^{2}}\\
	&\leq  \left\|x_{k}-x_{*}\right\|_{2}^{2}-[2\gamma\cdot(1-\eta)-\gamma^2]\frac{|\omega_k^T F_{I_{k}}(x_{k})|\cdot\|\omega_k \|_2\cdot\| F_{I_{k}}(x_{k})\|_2}{\left\|F_{I_{k}}^{\prime}(x_{k})^T\omega_k\right\|_{2}^{2}}\\
	&\leq \|x_k - x_*\|_2^2 - [2\gamma(1-\eta)-\gamma^2] \frac{|\omega_k^T F_{\mathcal{I}_k}(x_k)|\cdot\|F_{\mathcal{I}_k}(x_k)\|_2}{{\sigma_{\max}^2(F'_{\mathcal{I}_k}(x_k))}\cdot\|\omega_k\|_2}.
\end{aligned}$
In particular, when \(\omega_k = F_{\mathcal{I}_k}(x_k)\), we have

$\begin{aligned}
	\left\|x_{k+1}-x_{*}\right\|_{2}^{2}
	&\leq \|x_k - x_*\|_2^2 - [2\gamma(1-\eta)-\gamma^2] \frac{|\omega_k^T F_{\mathcal{I}_k}(x_k)|\cdot\|F_{\mathcal{I}_k}(x_k)\|_2}{{\sigma_{\max}^2(F'_{\mathcal{I}_k}(x_k))}\cdot\|\omega_k\|_2}\\
	&
	 \leq \|x_k - x_*\|_2^2 - [2\gamma(1-\eta)-\gamma^2] \frac{\|F_{\mathcal{I}_k}(x_k)\|_2^2}{{\sigma_{\max}^2(F'_{\mathcal{I}_k}(x_k))}}.
\end{aligned}$

This completes the proof.
\end{proof}

\begin{thm}
	Suppose that  $F(x)$ satisfies Assumption \ref{as:asc} and there exists $x_*\in D$ such that $F(x_*)=0$ and put the residual vector \( F(x_k) \) as the weight vector \( \omega_k \) for each iteration  with \( \gamma \in (0, 2(1-\eta)) \).
	 Then the sequence \( \{x_k\} \)  generated by Algorithm \ref{alg:RGFBK} converges to \( x_* \) in expectation satisfying that
	\begin{equation}\label{thm}
	\mathbb{E}\left[\|x_{k+1}-x_*\|_2^2\right] \leq \left( 1 - \frac{2\gamma(1-\eta)-\gamma^2}{(1+\eta^2)\kappa^2_{\alpha,\, \beta}(F(x_k))} \right) \mathbb{E}\left[\|x_{k}-x_*\|_2^2\right] 
	\end{equation}
	where the stochastic greedy condition number  \( \kappa_{\alpha,\, \beta}(F(x_k)) \) is defined in \eqref{sgc}.
\end{thm}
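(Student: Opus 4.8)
The plan is to turn the one–step bound of Lemma~\ref{le:key lem} into a linear contraction and then note that the expectation is almost a formality, because the estimate I shall derive holds uniformly over every realization of the random index set. Fix $k$ and condition on $x_k$. Whatever $\mathcal{I}_k\in\mathcal{I}^{\alpha,\beta}$ is drawn, Lemma~\ref{le:key lem} with $\omega_k=F_{\mathcal{I}_k}(x_k)$ gives
\[
\|x_{k+1}-x_*\|_2^2\le\|x_k-x_*\|_2^2-\big[2\gamma(1-\eta)-\gamma^2\big]\,\frac{\|F_{\mathcal{I}_k}(x_k)\|_2^2}{\sigma_{\max}^2\!\big(F'_{\mathcal{I}_k}(x_k)\big)},
\]
and since $\gamma\in(0,2(1-\eta))$ the coefficient $c:=2\gamma(1-\eta)-\gamma^2$ is strictly positive. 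So it suffices to lower bound the quotient $\|F_{\mathcal{I}_k}(x_k)\|_2^2/\sigma_{\max}^2(F'_{\mathcal{I}_k}(x_k))$ by a realization‑independent multiple of $\|x_k-x_*\|_2^2$.

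For that, I would apply the second estimate of Proposition~\ref{pro} (inequality (3.2)) with $x_1=x_k$, $x_2=x_*$ and row index set $I=\mathcal{I}_k$; since $F(x_*)=0$ we have $F_{\mathcal{I}_k}(x_*)=0$, hence
\[
\|F_{\mathcal{I}_k}(x_k)\|_2^2\ge\frac{1}{1+\eta^2}\,\big\|F'_{\mathcal{I}_k}(x_k)(x_k-x_*)\big\|_2^2\ge\frac{\sigma_{\min}^2\!\big(F'_{\mathcal{I}_k}(x_k)\big)}{1+\eta^2}\,\|x_k-x_*\|_2^2 .
\]
Dividing by $\sigma_{\max}^2(F'_{\mathcal{I}_k}(x_k))$ and using $\sigma_{\max}(F'_{\mathcal{I}_k}(x_k))/\sigma_{\min}(F'_{\mathcal{I}_k}(x_k))=\kappa(F'_{\mathcal{I}_k}(x_k))\le\kappa_{\alpha,\beta}(F'(x_k))$ — valid because $\mathcal{I}_k\in\mathcal{I}^{\alpha,\beta}$ and $\kappa_{\alpha,\beta}$ in \eqref{sgc} is precisely the maximum of $\kappa(\cdot_I)$ over that family — I obtain, for every realization,
\[
\|x_{k+1}-x_*\|_2^2\le\Big(1-\frac{2\gamma(1-\eta)-\gamma^2}{(1+\eta^2)\,\kappa_{\alpha,\beta}^2(F'(x_k))}\Big)\|x_k-x_*\|_2^2 .
\]
Taking the conditional expectation over the draw of $\mathcal{I}_k$ (the right‑hand side is deterministic given $x_k$) and then total expectation — treating $\kappa_{\alpha,\beta}(F'(x_k))$ as a fixed quantity, as is implicit in \eqref{thm}, or under a uniform bound on the stochastic greedy condition number along the iteration — gives the asserted recursion. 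Finally one checks the factor lies in $[0,1)$: $c$ is maximised at $\gamma=1-\eta$ with value $(1-\eta)^2\le1+\eta^2$ and $\kappa_{\alpha,\beta}(F'(x_k))\ge1$, so $0<c/\big((1+\eta^2)\kappa_{\alpha,\beta}^2(F'(x_k))\big)\le1$.

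The step I expect to be the main obstacle is the inequality $\big\|F'_{\mathcal{I}_k}(x_k)(x_k-x_*)\big\|_2^2\ge\sigma_{\min}^2(F'_{\mathcal{I}_k}(x_k))\|x_k-x_*\|_2^2$: it requires the $\beta$‑row block $F'_{\mathcal{I}_k}(x_k)$ to act injectively on $x_k-x_*$, whereas Assumption~\ref{as:asc}(ii) only furnishes full column rank (and a positive smallest singular value) for the whole Jacobian. Closing this cleanly forces one either to constrain $\beta$ (or the admissible family $\mathcal{I}^{\alpha,\beta}$) so that every block has full column rank, or to reinterpret $\sigma_{\min}$ as acting on the component of $x_k-x_*$ in $\operatorname{range}(F'_{\mathcal{I}_k}(x_k)^{\mathsf T})$ and recover the loss elsewhere. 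A secondary technical point is legitimising the passage from the pathwise bound to the stated expectation inequality when $\kappa_{\alpha,\beta}(F'(x_k))$ actually depends on $k$; a trajectory‑uniform condition‑number bound is the natural extra assumption there.
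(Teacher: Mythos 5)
Your proposal follows essentially the same route as the paper: the one-step bound of Lemma~\ref{le:key lem} with $\omega_k=F_{\mathcal{I}_k}(x_k)$, then inequality (3.2) of Proposition~\ref{pro} to convert $\|F_{\mathcal{I}_k}(x_k)\|_2^2$ into $\tfrac{1}{1+\eta^2}\|F'_{\mathcal{I}_k}(x_k)(x_k-x_*)\|_2^2$, then the singular-value estimate to produce $1/\kappa^2(F'_{\mathcal{I}_k}(x_k))$ and the maximization over $\mathcal{I}^{\alpha,\beta}$ — whether you take the conditional expectation before or after this realization-uniform bound is immaterial. The obstacle you flag, namely that $\|F'_{\mathcal{I}_k}(x_k)(x_k-x_*)\|_2\ge\sigma_{\min}(F'_{\mathcal{I}_k}(x_k))\|x_k-x_*\|_2$ requires the $\beta\times n$ block to act injectively on $x_k-x_*$ (which Assumption~\ref{as:asc}(ii) does not supply when $\beta<n$, since $\sigma_{\min}$ is defined as the nonzero minimum singular value), is genuine, but it is equally present and unaddressed in the paper's own proof, whose corresponding line even misprints $\sigma_{\max}^2$ where $\sigma_{\min}^2$ is intended in the numerator.
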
	
\begin{proof}
	By Lemma \ref{le:key lem}, we have
	\[
		\left\|x_{k+1}-x_{*}\right\|_{2}^{2}
		\leq \|x_k - x_*\|_2^2 - [2\gamma(1-\eta)-\gamma^2] \frac{\|F_{\mathcal{I}_k}(x_k)\|_2^2}{{\sigma_{\max}^2(F'_{\mathcal{I}_k}(x_k))}}.
\]
	
Computing the conditional expectation for both sides of the aforementioned inequality yields

	$\begin{aligned}
	\mathbb{E}_k[\| x_{k + 1} - x_* \|_2^2]
	&\leq \|x_k - x_*\|_2^2 - [2\gamma(1-\eta)-\gamma^2] \mathbb{E}_k\left[\frac{\|F_{\mathcal{I}_k}(x_k)\|_2^2}{{\sigma_{\max}^2(F'_{\mathcal{I}_k}(x_k))}}\right]\\
	&= \|x_k - x_*\|_2^2 -  [2\gamma(1-\eta)-\gamma^2]\sum_{\mathcal{I}_k\in \mathcal{I}^{\alpha,\,\beta}}\frac{\|F_{\mathcal{I}_k}(x_k)-F_{\mathcal{I}_k}(x_*)\|_2^2}{\binom{m}{\alpha}\cdot\sigma_{\max}^2(F'_{\mathcal{I}_k}(x_k))}\\
	&\leq \|x_k - x_*\|_2^2 -  \frac{[2\gamma(1-\eta)-\gamma^2]}{{(1+\eta^2)\cdot\binom{m}{\alpha}}}\sum_{\mathcal{I}_k\in \mathcal{I}^{\alpha,\,\beta}}\frac{\|F_{\mathcal{I}_k}^{\prime}(x_k)(x_k-x_*)\|_2^2}{\sigma_{\max}^2(F'_{\mathcal{I}_k}(x_k))}\\
	&\leq \|x_k - x_*\|_2^2 -  \frac{[2\gamma(1-\eta)-\gamma^2]}{{(1+\eta^2)\cdot\binom{m}{\alpha}}}\sum_{\mathcal{I}_k\in \mathcal{I}^{\alpha,\,\beta}}\frac{\sigma_{\max}^2(F'_{\mathcal{I}_k}(x_k))\cdot\|x_k-x_*\|_2^2}{\sigma_{\max}^2(F'_{\mathcal{I}_k}(x_k))}\\
	&\leq \|x_k - x_*\|_2^2 -  \frac{[2\gamma(1-\eta)-\gamma^2]}{{(1+\eta^2)\cdot\binom{m}{\alpha}}}\sum_{\mathcal{I}_k\in \mathcal{I}^{\alpha,\,\beta}}\frac{\|x_k-x_*\|_2^2}{\kappa^2(F'_{\mathcal{I}_k}(x_k))}\\
	&\leq \left( 1 - \frac{2\gamma(1-\eta)-\gamma^2}{(1+\eta^2)\kappa^2_{\alpha,\, \beta}(F(x_k))} \right) \| x_k - x_* \|_2^2.
\end{aligned}$

By computing the full expectation for both sides of the above inequality, we attain the desired result \eqref{thm}.
\end{proof}

	From Theorem \eqref{thm}, the upper bound on the convergence rate of the RGFBK method is $	\rho_k\left(\kappa^2_{\alpha,\, \beta}(F(x_k)), \,\gamma\right) =1 - \frac{2\gamma(1-\eta)-\gamma^2}{(1+\eta^2)\kappa^2_{\alpha,\, \beta}(F(x_k))} $. We observe that it is determined by the stochastic greedy condition number \(\kappa^2_{\alpha,\, \beta}(F(x_k))\) and the relaxation parameter \(\gamma\). When the relaxation parameter \(\gamma = 1 - \eta\), we have 
	\[
	\rho_k\left(\kappa^2_{\alpha,\, \beta}(F(x_k))\right) := \min_{\gamma \in (0,\, 2(1-\eta))} \frac{2\gamma(1-\eta) - \gamma^2}{(1+\eta^2)\kappa^2_{\alpha,\, \beta}(F(x_k))} = \frac{1-2\eta+\eta^2}{(1+\eta^2)\kappa^2_{\alpha,\, \beta}(F(x_k))}.
	\]  
	

\section{Numerical experiments}
We confirm the effectiveness and advantages of the RGFBK method through numerical examples and compare it with the following algorithms:

	The Maximum Residual-Based Block Sampling Nonlinear Kaczmarz (MR-BSNK) \cite{zhangli2024greedy}.
	
	The Maximum distance-Based Block Sampling Nonlinear Kaczmarz (MD-BSNK) \cite{zhangli2024greedy}.
	
	The residual-based block capped nonlinear Kaczmarz (RB-CNK) \cite{zhang2024greedy}.
	
	The residual-based weighted nonlinear Kaczmarz method (RBWNK) \cite{ye2024residual}.


In all numerical experiments, we adopt the following stopping criterion: \( \| r_k \|_2 \leq \text{RES} \) or the number of iterations (IT) exceeds \( 10^5 \), where \( \text{RES} \) is defined by  
$
\text{RES} = 10^{-6} + 10^{-8} \| r_0 \|_2 
$
as proposed in \cite{kelley2003solving}. The performance of different methods is evaluated using IT (number of iterations) and the elapsed CPU time from algorithm initiation until the stopping criterion is met. 
For the RGFBK method, we set \( \alpha = \lfloor 0.75m \rfloor \), \( \beta = \lfloor 0.5\alpha \rfloor \), and \( \gamma = 1.2 \). For other methods, appropriate parameters are selected to yield the best numerical performance. The Methods that do not converge are excluded from the figures. 

\begin{exm}
The Chandrasekhar H-equation \cite{babajee2010analysis}.
\end{exm}
	\begin{equation}
	F(H)(\mu) = H(\mu) - \left(1 - \frac{c}{2}\int_0^1 \frac{\mu H(\nu)}{\mu + \nu} d\nu\right)^{-1} = 0.
\end{equation}
Rooted in radiative transfer theory, this equation involves integrals approximated using the composite midpoint rule:
$
	\int_0^1 f(\mu) d\mu \approx \frac{1}{N}\sum_{j=1}^N f(\mu_j),
$
with \(\mu_i = (i - 1/2)/N\) for \(1 \leq i \leq N\), where \(N\) is the number of integration nodes. This leads to the discrete form:
\begin{equation}\label{eq:dis}
	F_i(x) = x_i - \left(1 - \frac{c}{2N}\sum_{j=1}^N \frac{\mu_i x_j}{\mu_i + \mu_j}\right)^{-1}.
\end{equation}
Taking $c=0.9$, the \eqref{eq:dis} is the $N$-dimensional nonlinear system we need to solve, and its Jacobian matrix is dense. We set the initial value as $x_0=(0,\cdots, 0)\in \mathbb{R}^m$.
\begin{figure}[!htb]
	\centering
	\subfloat
	\centering
	\includegraphics[width=0.35\textwidth]{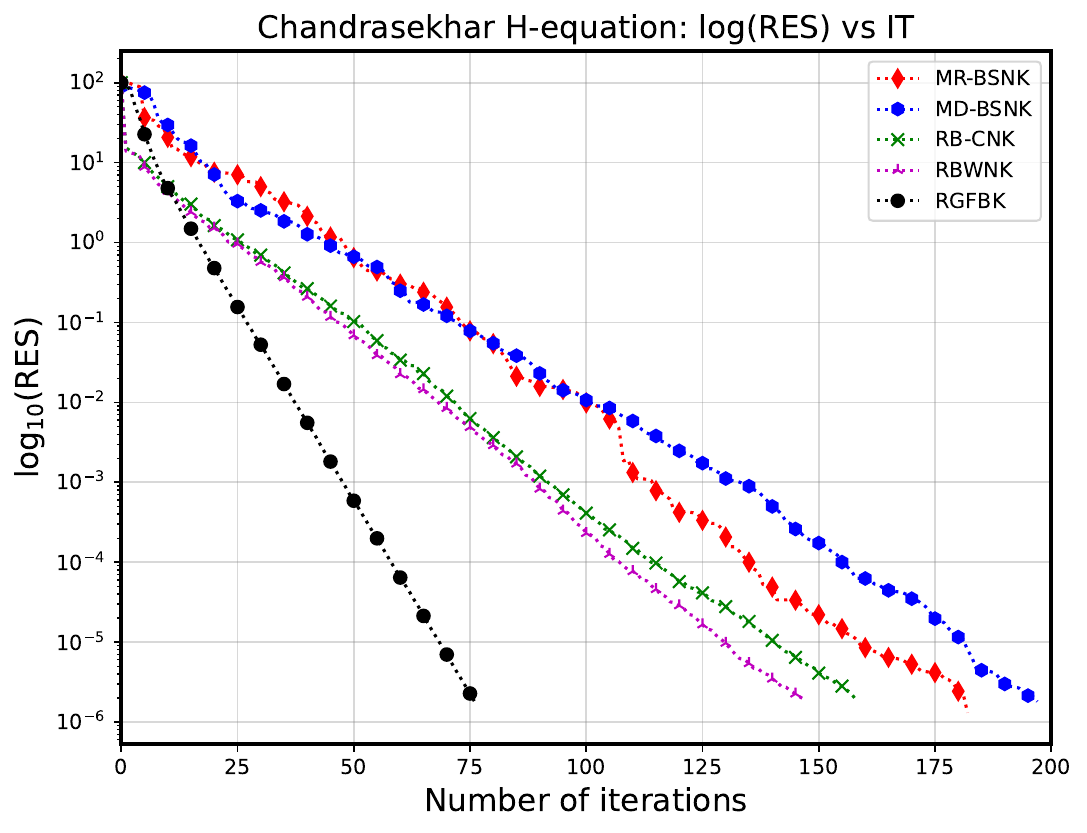}
	\hspace{0.1cm}
	\subfloat
	\centering
	\includegraphics[width=0.35\textwidth]{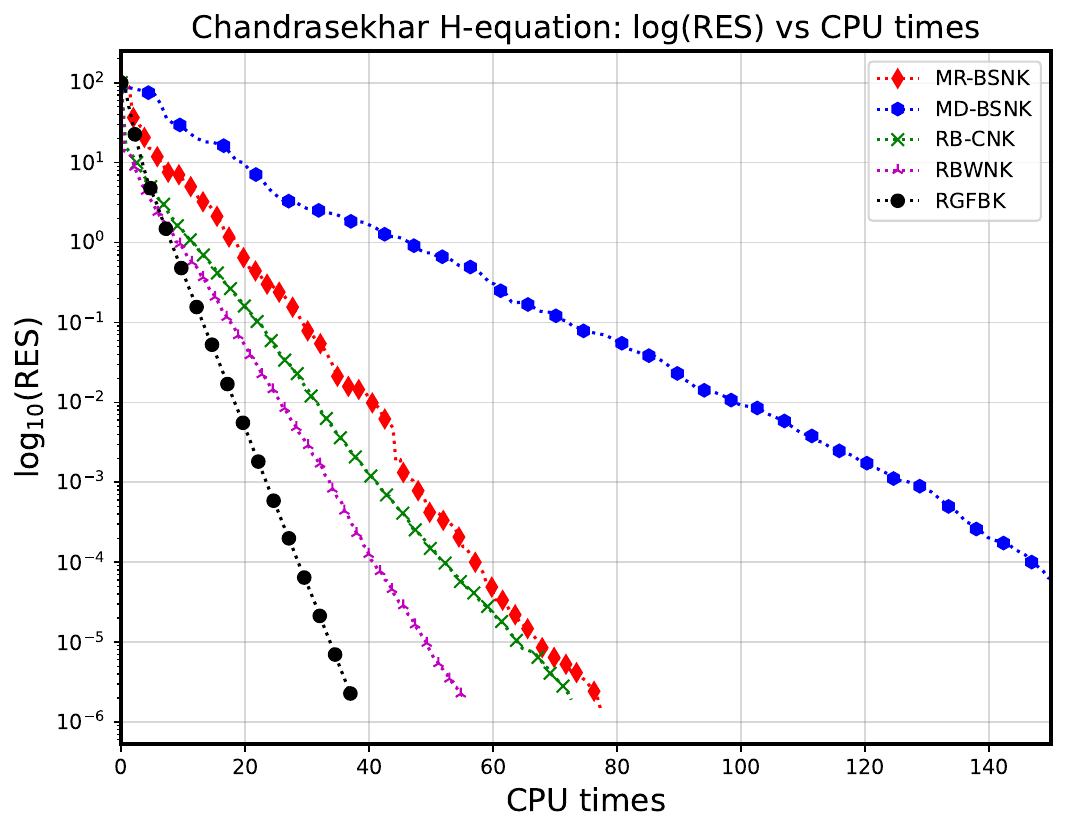}
	\hspace{0.1cm}

			\caption{Numerical results for the Chandrasekhar H-equations, $m=10^4$}
	\label{fig:ch}
\end{figure}
\begin{figure}[!htb]
	\centering
	\subfloat
	\centering
	\includegraphics[width=0.35\textwidth]{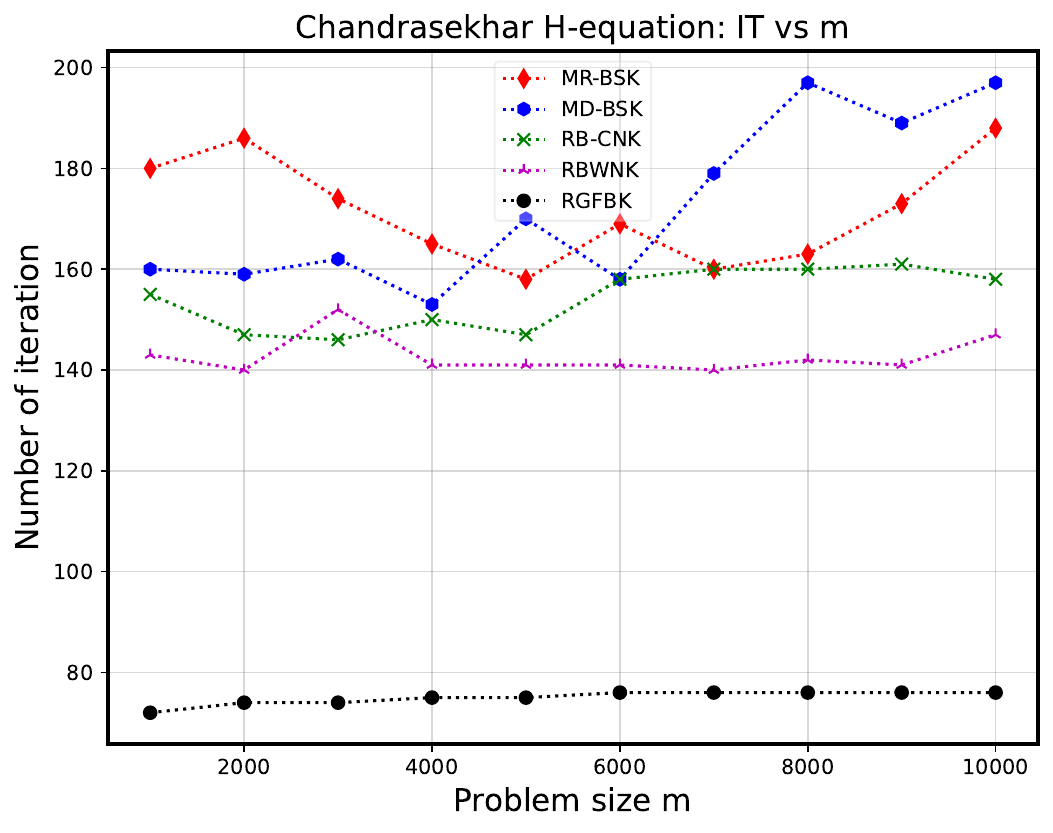}
	\hspace{0.1cm}
	\subfloat
	\centering
	\includegraphics[width=0.35\textwidth]{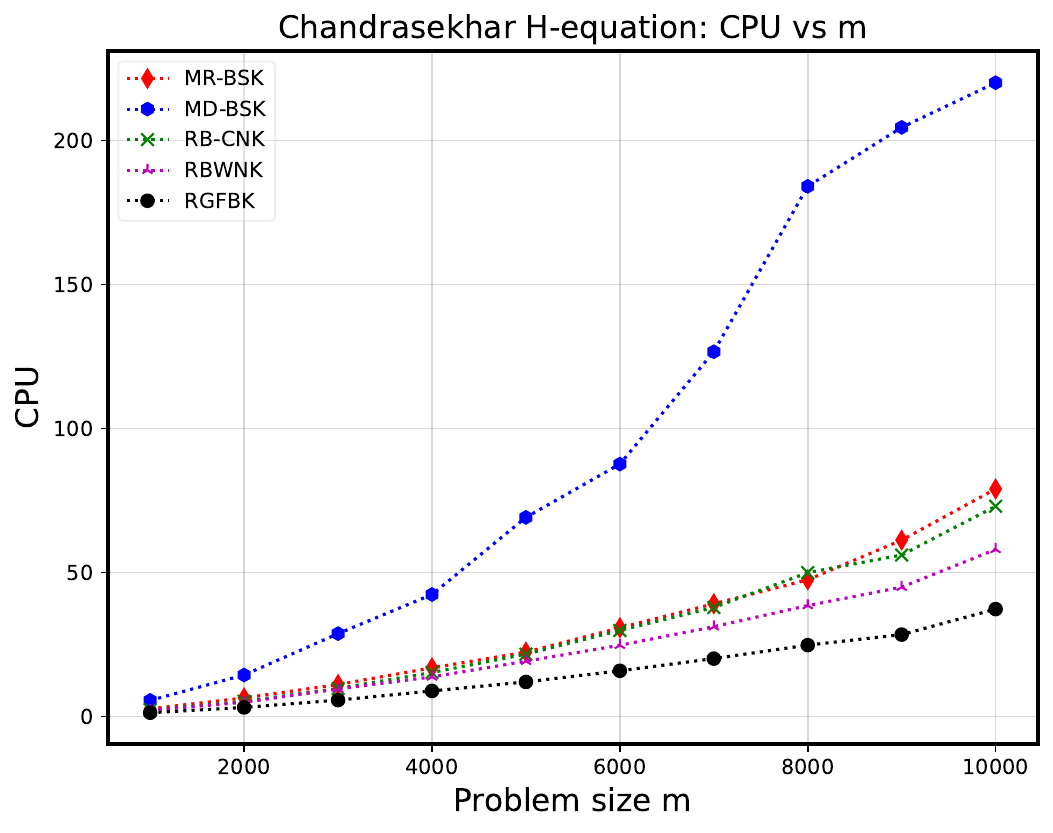}
	\hspace{0.1cm}
	
	\caption{Evolution of IT and CPU with the change of problem scale m, Chandrasekhar H-equations}
	\label{fig:chm}
\end{figure}
\begin{table}[!h]
	\caption{\label{table ch} IT and CPU of different methods for Chandrasekhar H-equation.}
	\begin{center}
		\begin{tabular}{l c cc cc cc cc cc cc}
			\toprule 
			& \multicolumn{1}{c}{problem size $m$} & \multicolumn{2}{c}{2000} & \multicolumn{2}{c}{4000}& \multicolumn{2}{c}{6000}& \multicolumn{2}{c}{8000}& \multicolumn{2}{c}{10000} \\
			\cmidrule(lr){3-4} \cmidrule(lr){5-6} \cmidrule(lr){7-8} \cmidrule(lr){9-10} \cmidrule(lr){11-12}
			& mathod & \multicolumn{1}{c}{IT} & \multicolumn{1}{c}{CPU} & \multicolumn{1}{c}{IT} & \multicolumn{1}{c}{CPU} & \multicolumn{1}{c}{IT} & \multicolumn{1}{c}{CPU}  & \multicolumn{1}{c}{IT} & \multicolumn{1}{c}{CPU} & \multicolumn{1}{c}{IT} & \multicolumn{1}{c}{CPU}\\ \midrule
			& MR-BSNK & 183 & 6.2154 & 166 & 17.556 & 172 & 30.798 & 162 & 49.850 & 182 & 77.693 \\
			& MD-BSNK & 159 & 16.425 & 158 & 46.264 & 158 & 96.335 & 195 & 183.72 & 197 & 216.53 \\
			& RB-CNK & 147 & 5.3850 & 150 & 15.581 & 158 & 28.765 & 160 & 50.7732 &158 & 73.854 \\
			& RBWNK & 140 & 5.0289 & 141 & 13.951 & 141 & 23.665 & 142 & 37.168 & 147 & 56.235 \\
			& RGFBK & 74 & 3.0256 & 75 & 8.1215 & 75 & 15.337 & 76 & 23.586 & 76 &35.524 \\ \bottomrule
		\end{tabular}
	\end{center}
\end{table}

From Figure \ref{fig:ch}, we can observe that the proposed RGFBK method outperforms all other block methods in terms of IT and CPU.
In Figure \ref{fig:chm} and Table \ref{table ch}, we present the variations in IT and CPU of different methods with respect to the problem size $m$. It can be observed that the proposed RGFBK method still outperforms other methods in terms of IT and CPU. Moreover, Figure \ref{fig:chm} shows that its growth curve is the flattest, confirming the effectiveness of the adopted random greedy strategy and the superior suitability of RGFBK for large-scale problems.

\begin{exm} 
	The  Broyden Tridiagonal Function \cite{lukvsan1994inexact}.
\end{exm}
	\begin{equation}
	\begin{cases}
		&F_{k}(x) =x_k(0.5x_k-3)+2x_{k+1}-1, \quad k=1,\notag\\
		&F_{k}(x)=x_k(0.5x_k-3)+x_{k-1}+2x_{k+1}-1, \quad 1<k<m,\\
		&F_{k}(x) =x_k(0.5x_k-3)-1+x_{k-1},k=m,
		\quad 
		k=1,\dots, m. 
	\end{cases}  
\end{equation}
We set the initial value as \( x_0 = (-1, \cdots, -1) \in \mathbb{R}^m \).
Figure \ref{fig:btm} shows the RGFBK method outperforms others in IT and CPU time. As \( m \) grows, its IT and CPU time increase very slow. 
Conversely, other methods’ performance deteriorates: the MD-BSNK method fails to converge across all problem sizes, while RBWNK and RB-CNK fail to converge as \( m \) increases. Thus, RGFBK suits large-scale problems much better.

From Figure \ref{fig:btp}, we observe how \( \alpha \) and \( \beta \) affect RGFBK’s performance: (i) For fixed \( \alpha \): IT decreases with \( \beta \) (except extreme full  sample cases); CPU first decreases then increases with \( \beta \) (excluding \( \alpha = 1000 \)). (ii) For fixed \( \beta \): IT decreases with \( \alpha \) (except \( \beta = 0.4\alpha \), where \( \alpha = 800 \) yields optimal IT); CPU attains optimality with properly chosen \( \alpha \), while extreme cases are costlier.  
The optimal result occurs at \( \alpha = 600 \), \( \beta = 0.4\alpha \), reflecting a trade-off between randomness and greediness. Here, the block size per iteration is \( \beta = 0.4\alpha = 240 \), well controlling memory usage and computational cost.

\begin{figure}[!htb]
	\centering
	\subfloat
	\centering
	\includegraphics[width=0.35\textwidth]{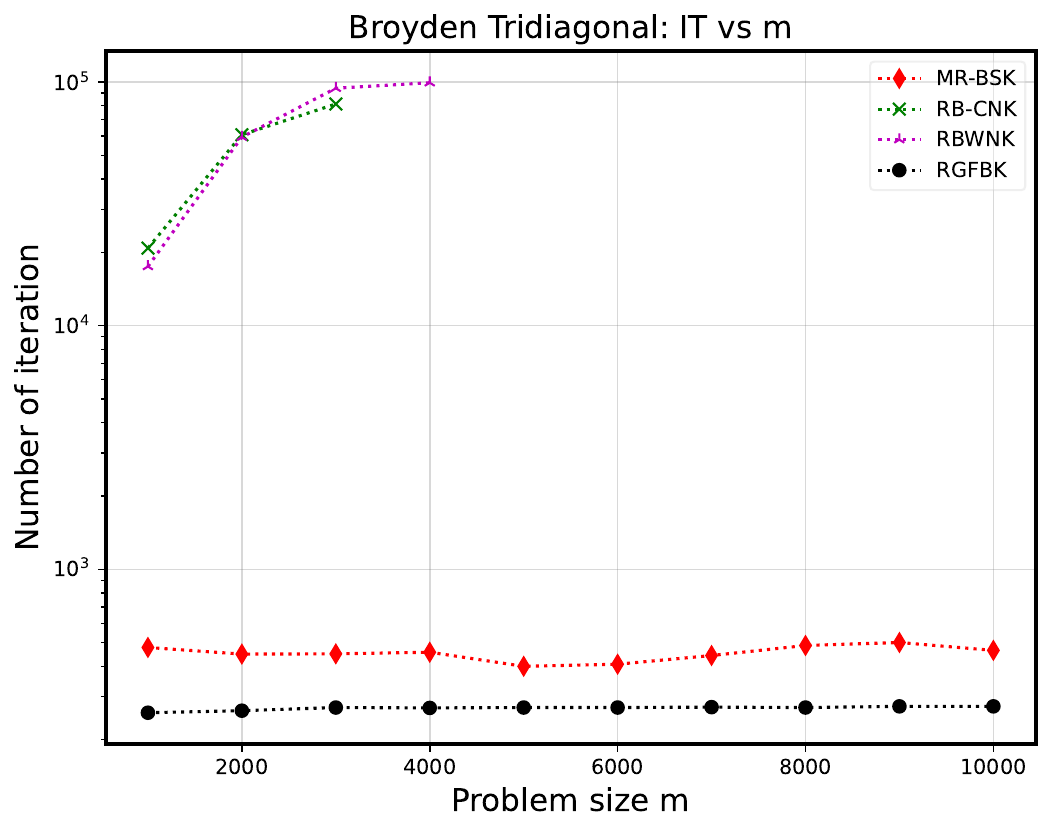}
	\hspace{0.1cm}
	\subfloat
	\centering
	\includegraphics[width=0.35\textwidth]{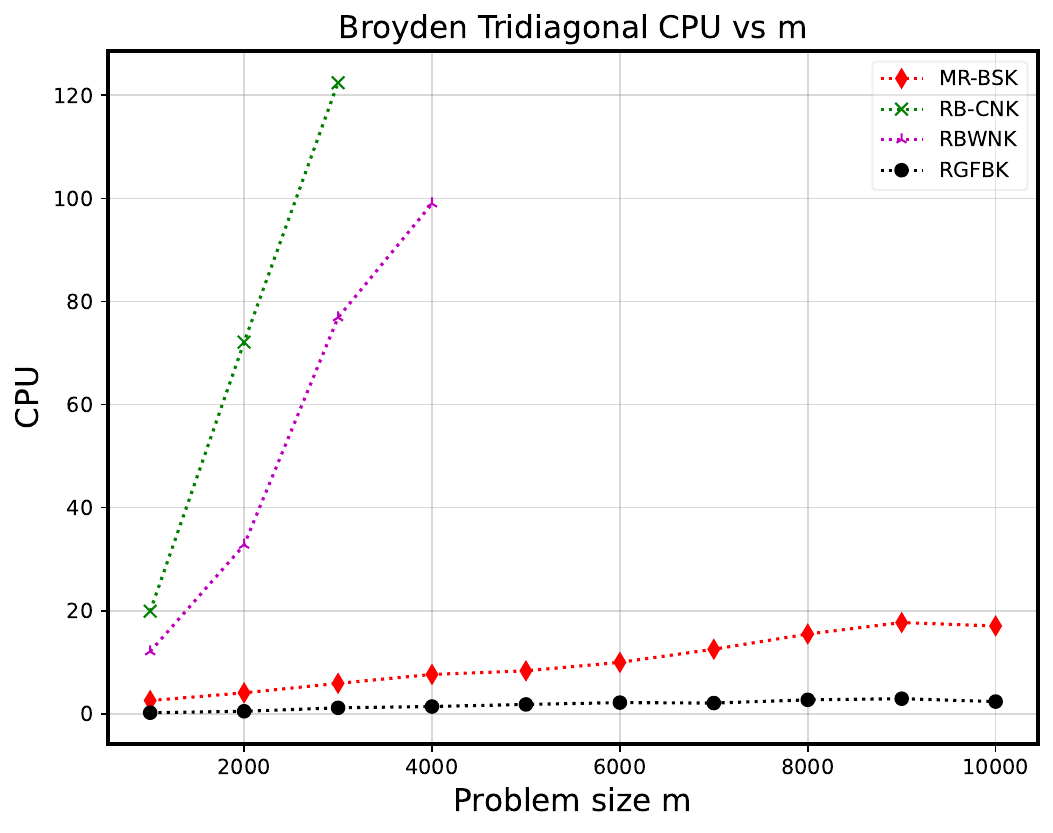}
	\hspace{0.1cm}
	\caption{Evolution of IT and CPU with the change of problem scale m, Broyden Tridiagonal Function}
	\label{fig:btm}
\end{figure}
	
\begin{figure}[!htb]
	\centering
	\subfloat
	\centering
	\includegraphics[width=0.35\textwidth]{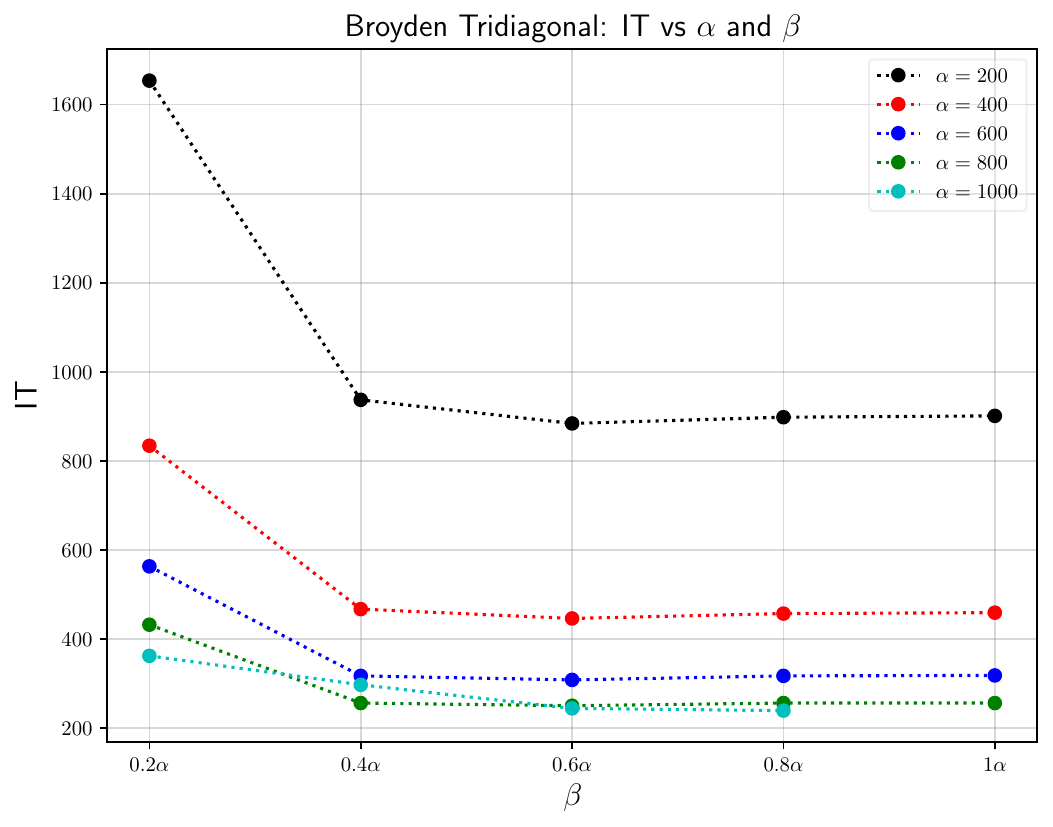}
	\hspace{0.1cm}
	\subfloat
	\centering
	\includegraphics[width=0.35\textwidth]{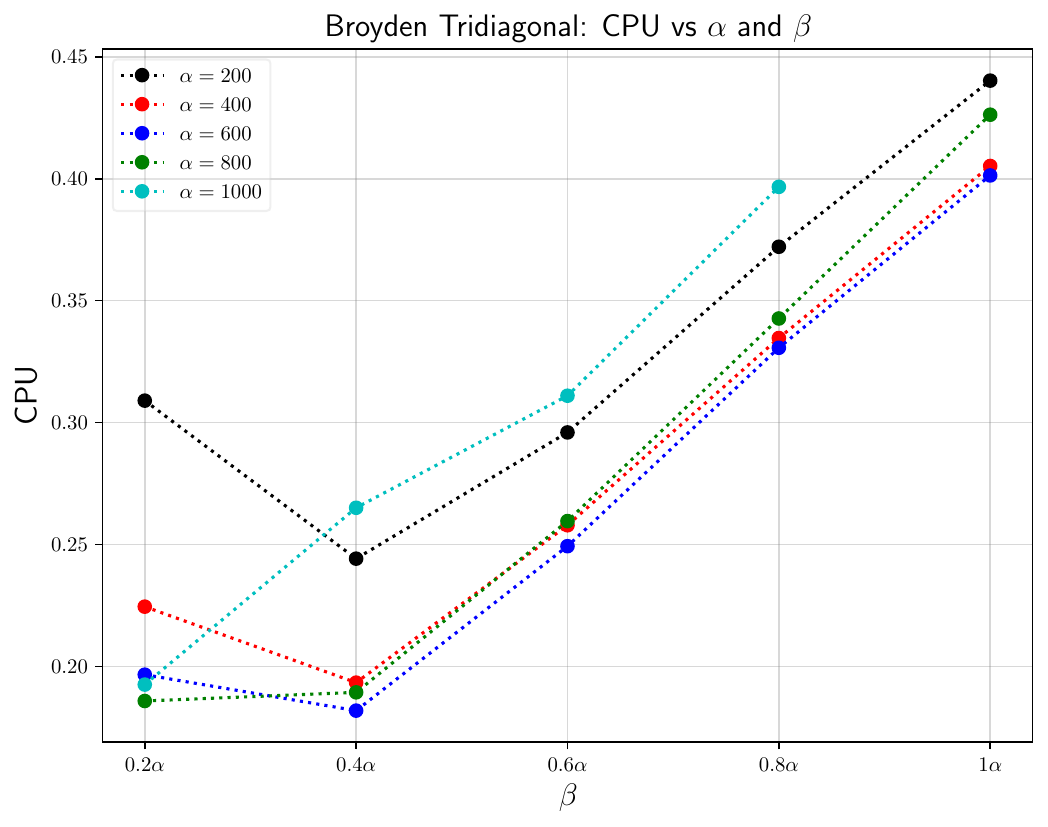}
	\hspace{0.1cm}
	
	\caption{Numerical results for the Broyden Tridiagonal Function, $m=10^3$}
	\label{fig:btp}
\end{figure}

\section{Concluding remarks}
In this paper, we propose a random greedy fast block Kaczmarz method and derive an upper bound for the convergence rate of the method, which is related to the stochastic condition number $\kappa_{\alpha,\, \beta}$ and the relaxation parameter \(\gamma\). When a favorable Jacobian matrix partitioning is obtained through random sampling of the stochastic greedy condition number and the relaxation parameter is appropriately chosen, the proposed RGFBK method can achieve faster convergence. Numerical experiments further validate the effectiveness of the proposed RGFBK method, demonstrating that it outperforms not only block algorithms based on pseudoinverse updates, including sampling-based MR-BSNK and MD-BSNK, and the greedy strategy-based RB-CNK method, but also deterministic block algorithms without pseudoinverse updates such as the RBWNK method.

\bibliographystyle{alpha}
\bibliography{NCH_ETD2}
\end{document}